\tikzstyle{every node}=[circle, draw, fill=black!50,
\newtheorem{theorem}{Theorem}
\newtheorem{lemma}{Lemma}
\newtheorem{prop}[lemma]{Proposition}
\newtheorem{obs}[lemma]{Observation}
\renewcommand{\le}{\leqslant}
\renewcommand{\ge}{\geqslant}
\renewcommand{\geq}{\geqslant}
\newcommand{\parag}[1]{\vspace{2mm}

\noindent{\bf #1} }
\def\qed{\ifvmode\mbox{ }\else\unskip\fi\hskip 1em plus 10fill$\Box$}
\def\Ddots{\mathinner{\mkern1mu\raise\p@
\vbox{\kern7\p@\hbox{.}}\mkern2mu
\raise4\p@\hbox{.}\mkern2mu\raise7\p@\hbox{.}\mkern1mu}}
\def\R{\mathbb R}
\def\Z{\mathbb Z}
\def\C{\mathbb C}
\def\F{\mathbb F}
\def\N{\mathbb N}
\def\E{\mathbb E}
\title{\vspace{-0.7cm}Ruzsa's problem on Bi-Sidon sets}
\author{J\'anos Pach \and Dmitrii Zakharov}
\date{}
\begin{document}
\maketitle

\begin{abstract}
A subset $S$ of real numbers is called \emph{bi-Sidon} if it is a Sidon set with respect to both addition and multiplication, i.e., if all pairwise sums and all pairwise products of elements of $S$ are distinct. Imre Ruzsa asked the following question: What is the maximum number $f(N)$ such that every set $S$ of $N$ real numbers contains a bi-Sidon subset of size at least $f(N)$? He proved that $f(N)\geq cN^{\frac13}$, for a constant $c>0$. In this note, we improve this bound to $N^{\frac13+\frac7{78}+o(1)}$. 
\end{abstract}

\section{Introduction}

A set of elements $S$ in an abelian group $(G,+)$ is called {\it Sidon} if for any elements $a, b, a', b' \in S$ we have $a+b=a'+b'$ if and only if $\{a, b\} = \{a', b'\}$. Sidon sets are classical objects in additive combinatorics and have been intensely studied over the decades \cite{Sidon, TaoVu}. It was first proved by Singer~\cite{Singer} in 1938, that if $p$ is a power of a prime, then one can find $p+1$ integers such that their pairwise sums are all different modulo $p^2+p+1$. Since the ratio of consecutive primes tends to $1$, this immediately implies that $\{1,\ldots,N\}$ has an (additive) Sidon subset of size at least $(1-o(1))N^{1/2}$. Unaware of Singer's result, three years later, Erd\H os and Tur\'an~\cite{ErdosTuran} established a slightly weaker lower bound and an upper bound of $N^{1/2}+O(N^{1/4}).$ (For minor improvements, see~\cite{Lindstrom, Cilleruelo, Balogh}.) Erd\H os repeatedly offered \$500 for a proof or disproof of the upper bound $N^{1/2}+o(N^{1/4}),$ but the problem is still open.
\smallskip

What happens if we replace the interval $\{1, \ldots, N\}$ with an arbitrary set $A \subset \R$ of size $N$? What is the size of the largest Sidon subset that can be found in any set $A$ of $N$ reals? Intuitively, the interval $\{1, \ldots, N\}$ has the richest ``additive structure'' among all sets of size $N$, so it should be the hardest to find Sidon subsets in it. This is indeed the case: Koml\'os, Sulyok and Szemer\'edi \cite{Komlos} proved that that any finite set $A \subset \R$ contains a Sidon subset $S$ of size at least $c |A|^{1/2}$. The simplest proof of this fact uses the so called ``Ruzsa projection trick''~\cite{RuzsaProjection}, which allows us to ``embed'' any set $A$ into an arithmetic progression of comparable length while preserving all relevant linear relations between the elements in $A$. In this context, it does not really matter, whether we work over $\Z, \R,$ or $\C$. To make our note self-contained, in the next section, we include the short proofs of some well known facts that we need. 
\smallskip

Similar statements hold for \emph{multiplicative Sidon sets} $S\subset \R$, i.e., where all solutions of the equation $ab=a'b'$ in $S$ satisfy $\{a,b\}=\{a',b'\}$. We obtain that very subset $A \subset \R$ contains a multiplicative Sidon subset of size $c |A|^{1/2}$, for some $c>0$. The order of magnitude of this bound cannot be improved, as is shown by a geometric progression of length $N$. 
\smallskip

More interesting things start to happen if we try to combine the additive and multiplicative structures of the real numbers. The exciting \emph{sum-product phenomenon} in Additive Combinatorics postulates, roughly speaking, that finite subsets of real numbers cannot have very rich additive and multiplicative structures at the same time. For any $A\subset\R$, let $A+A=\{a+b\,|\;a,b\in A\}, \; A\cdot A=\{a\cdot b\;|\;a,b\in A\}$. According to the celebrated Erd\H os--Szemer\'edi \emph{sum-product conjecture}~\cite{ErSz}, for any finite $A \subset \R$, we have 
\[
\max\{|A+A|, |A \cdot A|\} \ge |A|^{2-o(1)}
\]
as $|A|\rightarrow\infty.$ There has been a huge amount of work on the sum-product-type questions in many different settings, see, e.g., \cite{Elekes, Solymosi, RudnevStevens}.
\smallskip

In what follows, the letter $c$'s appearing in different formulas will denote usually unrelated positive constants.

For any $A\subset\R,$ let $S_{+}(A)$ and $S_{\times}(A)\subset A$ denote a maximum size \emph{additive} Sidon subset and a maximum size \emph{multiplicative} Sidon subset of $A$, respectively. We have 
$$\min(|S_{+}(A)|, |S_{\times}(A)|)\ge c|A|^{1/2},$$
for a suitable constant $c>0$. Another, less known example of the sum-product phenomenon is a recent conjecture of
Klurman and Pohoata \cite{Pohoata}, according to which any finite $A\subset \R$ satisfies
\begin{equation}\label{eq:KP}
    \max(|S_{+}(A)|, |S_{\times}(A)|)\ge c|A|^{1/2+\varepsilon},
\end{equation}
for a suitable absolute constant $\varepsilon>0$. Roche-Newton and Warren \cite{Roche21} proved that if this conjecture is true, then we must have $\varepsilon \le 1/6$. Shkredov  \cite{Shkredov} proved a version of (\ref{eq:KP}) where Sidon sets are replaced with a weaker notion of $B_2[g]$ sets, see also \cite{Mudgal}.
\smallskip

In this note, we consider a different way to combine addition and multiplication for Sidon sets, which was suggested by Ruzsa \cite{RuzsaSidon}. 
He called a set $S \subset \R$ is {\em bi-Sidon} if $S$ is both additive Sidon and multiplicative Sidon, i.e., for any $a, b, a', b' \in S$ any one of the equations $a+b=a'+b'$ or $a b = a' b'$ implies that $\{a, b\} =\{a', b'\}$. Let $S_{bi}(A)$ denote a maximum size bi-Sidon subset of $A$.
Ruzsa observed \cite{Ruzsa} that for every $A \subset \R$, we have 
$|S_{bi}(A)| \ge c|A|^{1/3}$, for an absolute constant $c>0$,
and asked if this bound can be improved. Consider a couple of examples:
\begin{itemize}
    \item Let $A = \{1, \ldots, N\}$. Then there is an additive Sidon subset $S_+\subset A$ of size at least $N^{1/2}$. On the other hand, the set $P$ of primes between $1$ and $N$ clearly forms a multiplicative Sidon subset in $A$ of size at least $(1+o(1)) \frac{N}{\log N},$ by the Prime Number Theorem. Thus, taking $S$ to be the intersection of $P$ with a random ``shift'' of $S_+$, we obtain a bi-Sidon subset in $\{1, \ldots, N\}$ of size at least $c N^{1/2} / \log N$.

    \item Let $A = \{ \gamma, \gamma^2, \ldots, \gamma^N \}$ for an arbitrary $\gamma \neq -1, 0,1$. Then we may consider $S_\times = \{\gamma^s, ~ s \in S_{+}\} \subset A$, where $S_{+} \subset \{1, \ldots, N\}$ is the additive Sidon set described in the previous example. Then $S_\times$ is a multiplicative Sidon subset in $A$ of size $N^{1/2}$. On the other hand, using some elementary number theory, one can upper bound the number of quadruples $\gamma^a+\gamma^b = \gamma^{c} + \gamma^d$ and find a linearly sized additive Sidon subset $B \subset A$. Taking $S$ to be the intersection of $B$ and a random dilate of $S_\times$, we obtain a bi-Sidon subset of size $c N^{1/2}$.
\end{itemize}

These two extreme examples suggest that \emph{any} set $A \subset \R$ should contain a bi-Sidon subset of size $|A|^{1/2+o(1)}$. In this note, we make the first improvement over Ruzsa's above mentioned bound.

\begin{theorem}\label{main}
    For any finite set $A \subset \R$, the size of $S_{bi}(A)$, the largest bi-Sidon subset of $A$, satisfies $|S_{bi}(A)| \gtrsim |A|^{\frac13 + \frac7{78}}.$ 
\end{theorem}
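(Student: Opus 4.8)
The plan is to classify $A$ according to how much additive and multiplicative structure it carries and to treat three regimes. The basic probabilistic tool, which I will call the \emph{restriction lemma}, is the following: for $S\subseteq A$, keep each element of $S$ independently with probability $p$ and then delete one element from each surviving non-trivial additive or multiplicative quadruple; the result is bi-Sidon, and since the expected number of deletions is at most $(E^+(S)+E^\times(S))p^4$, an optimal choice of $p$ yields a bi-Sidon subset of size $\gtrsim\min\bigl(|S|,\ |S|^{4/3}(E^+(S)+E^\times(S))^{-1/3}\bigr)$, where $E^+,E^\times$ denote additive and multiplicative energy. In particular, if both energies of $A$ are small, say $E^+(A)+E^\times(A)\le |A|^{3-c}$, then we immediately obtain a bi-Sidon subset of size $\gtrsim |A|^{1/3+c/3}$, already beating $|A|^{1/3}$. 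This is the first regime.

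The opposite extreme is when $A$ has large additive energy; there the restriction lemma is useless, since small doubling forces large energy, so I would generalize the first example in the introduction. By Balog--Szemer\'edi--Gowers, a large additive energy produces a dense subset $A'\subseteq A$ of small doubling, which by Freiman-type structure theory contains a long arithmetic progression (more generally, a large generalized progression); a progression of length $m$, like the interval in the first example via the primes it contains, has a multiplicative Sidon subset of size essentially $m^{1-o(1)}$. Passing to such a multiplicative Sidon subset $P\subseteq A$ and then taking a maximum additive Sidon subset of $P$ (of size $\gtrsim|P|^{1/2}$, by Koml\'os--Sulyok--Szemer\'edi) yields a bi-Sidon set whose size is $\gtrsim|A|^{1/2}$ diminished by a power of the additive doubling of $A$. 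Symmetrically, if $A$ has large multiplicative energy, then after taking logarithms it is additively structured, so — imitating the second example, together with the elementary number theory needed to bound the quadruples $\gamma^{a}+\gamma^{b}=\gamma^{c}+\gamma^{d}$ inside a (generalized) geometric progression — one extracts a large additive Sidon subset of $A$, inside which a maximum multiplicative Sidon subset is bi-Sidon.

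These three estimates move in opposite directions with the relevant doubling/energy parameter, so the worst case is an intermediate amount of structure. Parametrizing by that quantity and optimizing the three bounds against one another — while feeding in the quantitative sum-product and asymmetric energy inequalities, and tracking the polynomial losses incurred in Balog--Szemer\'edi--Gowers, Pl\"unnecke--Ruzsa and the passage to a progression — the balance point yields the exponent $\tfrac13+\tfrac7{78}$.

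The main obstacle is exactly this intermediate regime, together with making the structural step quantitatively efficient: a set whose additive doubling is a fixed small power of its size must already contain an arithmetic (respectively geometric) progression of comparable power with only a \emph{polynomial} loss, whereas a naive appeal to Freiman's theorem loses super-polynomially and would be fatal. Securing the structural input with a good enough exponent, and then arranging the three-way optimization so that none of the polynomial losses dominates, is where the value $\tfrac7{78}$ comes from; any improvement in the sum-product exponents or in the structural step would improve it.
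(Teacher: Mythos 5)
There is a genuine gap. Your easy regime (both $E^+(A)$ and $E^\times(A)$ small) is fine as far as it goes, but the whole weight of the theorem sits in the cases where one of the energies is large, and there your proposed mechanism --- Balog--Szemer\'edi--Gowers, then Freiman-type structure, then a long arithmetic (or geometric) progression inside $A$, then primes inside that progression --- cannot be made quantitative in the way you need. A set of doubling $K=|A|^{\varepsilon}$ is only known to sit inside a generalized progression of dimension polylogarithmic in $|A|$, so the longest genuine arithmetic progression one can extract \emph{inside} such a set has subpolynomial length; the loss here is super-polynomial, exactly the failure mode you flag yourself as ``the main obstacle,'' and you offer no way around it. The three-way optimization that is supposed to produce $\tfrac7{78}$ is never performed, and its inputs do not exist in the required form, so the exponent is asserted rather than derived. (A smaller issue: the ``multiplicative Sidon subset of size $m^{1-o(1)}$ via primes'' works for the interval $\{1,\dots,m\}$, not for a general progression $\{a,a+d,\dots\}$.)

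The paper avoids all of this. It never needs both energies small and never passes through Freiman structure theory. Shakan's energy decomposition gives $A'\subset A$, $|A'|\ge|A|/2$, with $\min\{E^+(A'),E^\times(A')\}\lesssim|A|^{3-\delta}$, $\delta=\tfrac7{26}$; say $E^\times(A')$ is the small one. The key idea you are missing is that one does not delete for the additive quadruples at all: one Freiman-embeds $A'$ into $\F_p^2$ with $p\asymp|A|^{1/2}$ and takes $B$ to be the preimage of a \emph{uniformly random parabola}. Since a parabola is a Sidon set in $\F_p^2$, $B$ is automatically additive Sidon with $\E|B|\gtrsim|A|^{1/2}$, while each nondegenerate multiplicative quadruple of $A'$ survives into $B$ with probability $O(p^{-4})$, so $\E[E^\times_0(B)]\lesssim p^{-4}|A|^{3-\delta}\lesssim|A|^{1-\delta}$. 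A final sparsification with probability $q\asymp|A|^{\delta/3-1/6}$ plus deletion of one element per surviving multiplicative quadruple yields a bi-Sidon set of size $\gtrsim|A|^{1/3+\delta/3}=|A|^{1/3+7/78}$. So the exponent comes directly from Shakan's $\delta=\tfrac7{26}$ divided by $3$, not from balancing three structural regimes.
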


As usual in the subject, here and throughout our note, the asymptotic notation $X \lesssim Y$ expresses that ``$X \le C Y \log^c|A|$ for some constants $C, c \ge 0$.'' 
Our proof, which will be outlined in the next section, uses probabilistic arguments and some energy sum-product estimates. The details will be worked out in the last section.
\smallskip

Our methods are applicable to any other field where appropriate energy sum-product estimates are available. With minor modifications of our arguments, we obtain

\begin{theorem}\label{theorem2}
    Let $A \subset \F$ be a finite subset of a field $\F$, and let $S_{bi}(A)$ denote a largest bi-Sidon subset in $A$. Then we have
    \begin{enumerate}[(i)]
        \item $|S_{bi}(A)| \gtrsim c |A|^{5/12}$ if $\F = \C$;
        \item $|S_{bi}(A)| \gtrsim c |A|^{2/5}$ if $\F = \F_p$ and $|A|\le p^{5/8}$;
        \item $|S_{bi}(A)| \ge c |A|^{1/3+\delta}$ if $\F = \F_p$ and $|A| \le p^{1-\varepsilon},$ \\
        where $\delta =\delta(\varepsilon)>0$ is a constant depending on     $\varepsilon.$
    \end{enumerate}
\end{theorem}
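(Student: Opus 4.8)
The plan is to rerun the proof of Theorem~\ref{main} essentially verbatim, changing only its single field-dependent ingredient. That proof, carried out in the last section, splits into two parts. Part (a) is a purely combinatorial cleaning step: one takes a subset $B\subseteq A$ equipped with upper bounds for the number of additive quadruples $x+y=z+w$ and for the number of multiplicative quadruples $xy=zw$ inside $B$, passes to a random subsample, and deletes one point from each surviving bad quadruple of either type; the output is a bi-Sidon set whose size is an explicit power of $|B|$ and of the two quadruple counts (combined with the bound $\min(|S_+(A)|,|S_\times(A)|)\gtrsim|A|^{1/2}$ of Koml\'os--Sulyok--Szemer\'edi, and with a preliminary reduction supplying a convenient $B$). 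Part (b) is a sum--product energy estimate over $\R$, which feeds part (a) the set $B$ and the quadruple bounds that, after optimisation, yield the exponent $\tfrac13+\tfrac7{78}$. Part (a) only counts solutions of $x+y=z+w$ and $xy=zw$ and then removes them by inclusion--exclusion, so it makes sense, and goes through word for word, over $\C$ and over $\F_p$. Hence the proof of Theorem~\ref{theorem2} amounts to substituting, in part (b), the strongest available sum--product estimate over the relevant field, and redoing the same optimisation.

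For (i), over $\C$ the Szemer\'edi--Trotter incidence theorem holds (T\'oth, Zahl), so the Elekes-type and Balog--Wooley-type energy estimates are available (see Konyagin--Rudnev), but with exponents weaker than over $\R$: the sharpest real estimates of Solymosi and of Rudnev--Shkredov--Stevens exploit the order of $\R$ through monotone/median arguments that have no analogue over $\C$, and this is exactly the loss that turns $\tfrac7{78}$ into $\tfrac1{12}$, giving $|S_{bi}(A)|\gtrsim|A|^{1/3+1/12}=|A|^{5/12}$. For (ii), over $\F_p$ with $|A|\le p^{5/8}$ one uses Rudnev's point--plane incidence bound and its consequences for additive and multiplicative energy (Roche-Newton--Rudnev--Shkredov, Stevens--de Zeeuw); $|A|\le p^{5/8}$ is precisely the range in which these beat the trivial bounds and survive all subsampling, and the optimisation produces $|A|^{1/3+1/15}=|A|^{2/5}$. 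For (iii), over $\F_p$ with $|A|\le p^{1-\varepsilon}$ the Bourgain--Katz--Tao / Bourgain--Glibichuk--Konyagin theorem gives only a qualitative gain $\max\bigl(E_+(A),E_\times(A)\bigr)\le c|A|^{3-\kappa}$ with $\kappa=\kappa(\varepsilon)>0$; inserting this into part (a) in place of the sharp real bound still outputs a bi-Sidon subset of size $c|A|^{1/3+\delta}$ with $\delta=\delta(\varepsilon)=h(\kappa(\varepsilon))>0$, where $h$ is the explicit increasing function coming from the optimisation.

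The work lies entirely in the non-probabilistic scaffolding. Over $\F_p$ one cannot project to an interval, so the Ruzsa projection trick and the Koml\'os--Sulyok--Szemer\'edi bound must be applied through Freiman $2$-homomorphisms into $\Z$, which exist only when $|A|$ is below a fixed power of $p$ --- this, and the corresponding range of validity of Rudnev's incidence bound, is the true origin of the hypotheses in (ii) and (iii); the multiplicative analogue runs through $\F_p^\times\cong\Z/(p-1)\Z$ and then into $\Z$, again requiring $|A|$ small. Over $\C$ there is no order, so any step of the real proof that used a median, a dyadic pigeonholing along $\R$, or a monotone incidence count must be rephrased basis-freely. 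The main obstacle is therefore twofold and technical rather than conceptual: first, re-deriving the $\C$- and $\F_p$-sum--product energy estimates in exactly the shape that part (a) consumes (the literature records them in several non-matching normalisations); and second, tracking how each field's sum--product exponent propagates through the optimisation and checking that $|A|\le p^{5/8}$ (resp. $|A|\le p^{1-\varepsilon}$) is preserved under every random subsample and passage to a subset. Granting these checks, (i)--(iii) follow from the identical computation that proves Theorem~\ref{main}.
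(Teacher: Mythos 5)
Your overall strategy is exactly the paper's: the probabilistic machinery of Theorem~\ref{main} (random parabola, Freiman embedding, subsampling plus deletion) is field-agnostic, and the only ingredient to swap is the energy sum--product estimate, with the final exponent always $\frac13+\frac{\delta}{3}$. Your exponents $5/12$ and $2/5$ correspond to $\delta=\frac14$ over $\C$ and $\delta=\frac15$ over $\F_p$ for $|A|\le p^{5/8}$, which is precisely the Rudnev--Shkredov--Stevens estimate the paper invokes for (i) and (ii). Your remarks about the scaffolding (rectifying $\F_p$ and $\F_p^\times\cong\Z/(p-1)\Z$ into $\Z$ before applying Propositions~\ref{prop1} and~\ref{prop2}, since these groups are not torsion-free) are legitimate and in fact more careful than the paper, which passes over this silently.

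There is, however, a genuine misstep in your treatment of (iii). The inequality you propose to extract from Bourgain--Katz--Tao, namely $\max\bigl(E^+(A),E^\times(A)\bigr)\le c|A|^{3-\kappa}$ for the full set $A$, is false: an arithmetic progression has $E^+(A)\gg|A|^3$ while satisfying $|A|\le p^{1-\varepsilon}$. The sum--product theorem bounds $\max\{|A+A|,|A\cdot A|\}$ from below and does not by itself control either energy of $A$, let alone both; a set can simultaneously contain a large subset of high additive energy and a large subset of high multiplicative energy. What the argument actually consumes is the existence of a single subset $A'\subset A$ with $|A'|\ge|A|/2$ and $\min\{E^+(A'),E^\times(A')\}\lesssim|A|^{3-\kappa}$, i.e.\ a low-energy \emph{decomposition}. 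This is exactly the Balog--Wooley theorem (\cite{BalogWooley}, Theorem~1.3), which is what the paper cites for (iii); deriving such a decomposition from the raw Bourgain--Katz--Tao expansion statement is a nontrivial step (it is the content of Balog--Wooley's paper), not a reformulation. With that substitution, part (iii) goes through as you describe, and (i)--(ii) are fine as is.
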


It is an interesting unsolved problem to decide whether (iii) holds with an absolute constant $\delta>0$, without making any assumption on the size of the subset $A\subseteq \F_p$.

\medskip

{\em Acknowledgements.} We thank Imre Ruzsa for helpful conversations and for calling our attention to the problem addressed in this note. We are also grateful to Ilya Shkredov for updating us on the latest literature on energies and sum-product theorems. The first author's work was partially supported by ERC Advanced Grant ``GeoScape'' no.~882971 and NKFIH (National Research, Development and Innovation Office) grant K-131529. The second author was supported by the Jane Street Graduate Fellowship.

\section{Freiman homomorphisms, energies, proof idea}

In this section, we collect some standard definitions and results from additive combinatorics, which will be needed for the proof.

\parag{Freiman homomorphisms.}
Let $G, H$ be abelian groups and let $f: A \rightarrow H$ be a map defined on some subset $A \subset G$. We say that $f$ is a {\em Freiman homomorphism} on $A$ if for any elements $a, b, a', b' \in A$ such that $a+b=a'+b'$ it follows that $f(a)+f(b)=f(a')+f(b')$. A {\em Freiman emdedding} is a Freiman homomorphism $f: A \rightarrow H$ such that $f(a) \neq f(b)$ for any distinct $a\neq b \in A$ (i.e. $f$ is a Freiman isomorphism from $A$ to $f(A)$, see \cite{TaoVu}). 

For the sake of completeness, we include the short proofs of two simple and well known facts on Freiman embeddings that we need.

\begin{prop}\label{prop1}
    Let $G$ be a torsion-free abelian group and $A \subset G$ be a finite subset of $G$. 
    
    Then there exists a Freiman embedding $f: A \rightarrow \Z$.
\end{prop}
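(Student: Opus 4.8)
The plan is to reduce to the case $G=\Z^d$ and then exhibit an honest group homomorphism $\Z^d\to\Z$ that is injective on $A$, using the elementary observation that a group homomorphism automatically respects every additive equation, hence is in particular a Freiman homomorphism.

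First I would pass from $G$ to the subgroup $G_0$ generated by the finite set $A$. This $G_0$ is finitely generated and, as a subgroup of the torsion-free group $G$, is itself torsion-free; by the structure theorem for finitely generated abelian groups it is isomorphic to $\Z^d$ for some $d\ge 0$. Fixing such an isomorphism, I may and do regard $A$ as a finite subset of $\Z^d$. Any linear map $\phi:\Z^d\to\Z$ preserves all relations of the form $x+y=x'+y'$, so $\phi|_A$ is a Freiman homomorphism on $A$ for free; it remains only to choose $\phi$ injective on $A$.

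To do this I would use the ``moment map'' trick. Let $D=A-A\subset\Z^d$ be the finite difference set, pick an integer $M$ with $M>2\max_{v\in D}\max_{1\le i\le d}|v_i|$, and set $\phi(x_1,\dots,x_d)=\sum_{i=1}^d M^{\,i-1}x_i$. If $\phi(a)=\phi(b)$ with $a,b\in A$, then $v=a-b\in D$ satisfies $\sum_{i=1}^d M^{\,i-1}v_i=0$ with $|v_i|<M/2$ for every $i$; reducing modulo $M$ gives $v_1=0$, and dividing by $M$ and iterating gives $v_i=0$ for all $i$, so $a=b$. Thus $f=\phi|_A$ is the required Freiman embedding $A\to\Z$. (An alternative to the explicit $\phi$: for each nonzero $v\in D$ the homomorphisms $\Z^d\to\Z$ killing $v$ form a proper subgroup of $\mathrm{Hom}(\Z^d,\Z)\cong\Z^d$, and $\Z^d$ is not a finite union of proper subgroups, so a ``generic'' $\phi$ works; I would present whichever is shorter.)

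I do not expect a genuine obstacle here. The only two points that need a word of care are the reduction to $\Z^d$ — invoking that subgroups of torsion-free abelian groups are torsion-free together with the classification of finitely generated torsion-free abelian groups — and the verification that the positional-notation map $\phi$ separates the points of $A$, which is the standard base-$M$ uniqueness argument carried out above.
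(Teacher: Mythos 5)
Your proposal is correct and follows essentially the same route as the paper: reduce to $G\cong\Z^d$ and compose with a linear map $\sum n_i x_i$ whose coefficients grow fast enough to separate the points of $A$. You simply fill in the details the paper leaves implicit (justifying the reduction via the subgroup generated by $A$, and verifying injectivity by the base-$M$ argument).
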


\begin{proof}
    We may assume that $G \cong \Z^d$. Let $n_1 < n_2 <\ldots < n_d$ be a sequence of integers and define for $a = (a_1, \ldots, a_d) \in A$: 
    \[
    f(a) = n_1 a_1 + n_2 a_2 + \ldots + n_d a_d \in \Z.
    \]
    Clearly, $f$ is a Freiman homomorphism, and it is an embedding if we choose $n_1, \ldots, n_d$ to be growing fast enough.
\end{proof}

\begin{prop}\label{prop2}
    Let $A \subset \Z$ be a set of size $N$. For any prime $p\ge 2$ and any integer $d \ge 1$ satisfying the condition $N \le 2^{-2d-1}p^d$, there exist a subset $A' \subset A$ of size at least $2^{-d-1}N$ and a Freiman embedding $f: A' \rightarrow \F_p^d$.
\end{prop}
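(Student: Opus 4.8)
The plan is to model $A$ inside $\Z/q\Z$ for a large prime $q$, apply $d$ independent random dilations, and record, for each dilation, which of $p$ equal blocks of $\Z/q\Z$ the dilated point falls into; these $d$ block indices will be the coordinates of a candidate map into $\F_p^d$. First I would fix a sufficiently large prime $q > 2(\max A - \min A)$. Then reduction modulo $q$ is injective on $A$ and both preserves and reflects every relation $a+b = a'+b'$ with $a,b,a',b' \in A$, so it does no harm to replace $A$ by $\tilde A := A \bmod q \subseteq \Z/q\Z$: a Freiman embedding of a large subset of $\tilde A$ into $\F_p^d$ pulls back to one of the corresponding subset of $A$.

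Next I would pick $\lambda_1,\dots,\lambda_d$ independently and uniformly from $(\Z/q\Z)^\times$, split $\{0,1,\dots,q-1\}$ into $p$ consecutive blocks of length about $q/p$, and let $\beta\colon\Z/q\Z\to\F_p$ send an element to the index of its block. For $\tilde a\in\tilde A$ set $F(\tilde a) := (\beta(\lambda_1\tilde a),\dots,\beta(\lambda_d\tilde a))\in\F_p^d$, and let $X\subseteq\tilde A$ be the set of those $\tilde a$ for which $\lambda_i\tilde a$ lies in the first half of its block for every $i$. The crucial point is that $F$ is a Freiman homomorphism on $X$: if we write $\alpha_i(\tilde a) := (\lambda_i\tilde a\bmod q)/q\in[0,1)$, so that the $i$-th coordinate of $F$ is $\lfloor p\,\alpha_i(\tilde a)\rfloor\bmod p$, then for $\tilde a+\tilde b=\tilde a'+\tilde b'$ in $\Z/q\Z$ one has $\alpha_i(\tilde a)+\alpha_i(\tilde b)-\alpha_i(\tilde a')-\alpha_i(\tilde b')\in\{-1,0,1\}$, whence $\lfloor p\alpha_i(\tilde a)\rfloor+\lfloor p\alpha_i(\tilde b)\rfloor-\lfloor p\alpha_i(\tilde a')\rfloor-\lfloor p\alpha_i(\tilde b')\rfloor$ equals a multiple of $p$ plus an integer that, thanks to the defining condition of $X$ (which makes each fractional part $\{p\alpha_i(\cdot)\}$ smaller than $\tfrac12$), lies in $(-1,1)$ and therefore vanishes. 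Since reduction mod $q$ and the dilations are genuine homomorphisms, the only rounding happens in $\beta$, which this restriction controls; so precomposing with reduction mod $q$ yields a Freiman homomorphism $\{a\in A : a\bmod q\in X\}\to\F_p^d$.

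The remaining step, which I expect to be the real obstacle, is to choose $\vec\lambda$ so that this map is additionally injective on $X$ while $|X|\ge 2^{-d-1}N$. A collision $F(\tilde a)=F(\tilde a')$ with $\tilde a,\tilde a'\in X$ forces $\lambda_i\tilde a$ and $\lambda_i\tilde a'$ into the first half of a common block for every $i$, hence $\|\lambda_i\delta/q\|<1/(2p)$ for every $i$, where $\delta := \tilde a-\tilde a'\ne0$ and $\|\cdot\|$ is distance to the nearest integer. For a fixed nonzero $\delta$ the element $\lambda_i\delta$ is uniform on $(\Z/q\Z)^\times$, so each such event has probability $(1+o(1))/p$ and their conjunction has probability $((1+o(1))/p)^d$; summing over the at most $N^2$ nonzero differences shows that the expected number of differences that could witness a collision is $O(N^2/p^d)$, and it is exactly here that the hypothesis $N\le 2^{-2d-1}p^d$ is invoked to keep this quantity under control. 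At the same time $\pr[\tilde a\in X]\ge(1-o(1))2^{-d}$ for every $\tilde a$, so $\E|X|\ge(1-o(1))2^{-d}N$, and a short first-moment argument then produces a single $\vec\lambda$ for which no nonzero difference witnesses a collision and $|X|\ge 2^{-d-1}N$ simultaneously. Taking $A' := \{a\in A : a\bmod q\in X\}$ and letting $f$ be the composed map completes the proof; the only genuinely delicate part is this last balancing act, which is also where the explicit constants are pinned down.
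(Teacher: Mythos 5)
Your construction is essentially the paper's, in discretized form: the paper dilates by random reals $\theta_i\in[0,1]$ and uses the map $x\mapsto[px]\bmod p$ directly on $A\subset\Z$, whereas you first reduce modulo an auxiliary large prime $q$ and dilate by random units of $\Z/q\Z$; the ``first half of the block'' restriction and the resulting Freiman-homomorphism computation are the same, and your collision estimate $\|\lambda_i\delta/q\|<1/(2p)$ per coordinate, hence $\approx p^{-d}$ per pair, matches the paper's $(2/p)^d$ up to constants that the hypothesis $N\le 2^{-2d-1}p^d$ absorbs with room to spare.

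The one step that does not work as written is the endgame. The expected number of colliding pairs inside $X$ is of order $N^2/p^d\le 2^{-2d-1}N$, which is typically much larger than $1$, so no first-moment argument can produce a $\vec\lambda$ for which \emph{no} nonzero difference witnesses a collision; consequently your final choice $A'=\{a\in A:\ a\bmod q\in X\}$ need not make $f$ injective. The repair is the standard deletion the paper performs: let $C\subseteq X$ be the set of elements involved in some collision, note $\E|C|$ is at most the expected number of ordered colliding pairs, i.e.\ at most about $2^{-2d-1}N$, and take $A'$ to correspond to $X\setminus C$, on which $F$ is automatically injective; then
$\E|X\setminus C|\ \ge\ (1-o(1))2^{-d}N-(1+o(1))2^{-2d-1}N\ \ge\ 2^{-d-1}N$
for $d\ge 1$, and a single application of the first moment method finishes the proof. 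All the estimates you derived already support this; only the logic of the last paragraph needs to be changed.
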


\begin{proof}
    Define $\varphi_p: \R \rightarrow \F_p$ to be the map $\varphi_p(x) = [p x] \pmod p$. Let $\theta_1, \ldots, \theta_d \in [0,1]$ be uniformly random and independent numbers and define a (random) map $f: \Z \rightarrow \F_p^d$ by
    \[
    f(a) = (\varphi_p(\theta_1 a), \ldots, \varphi_p(\theta_d a)).
    \]
    Let $B \subset A$ be the subset of elements $a \in A$ such that $\{p\theta_i a\} \in [0, 1/2)$ for every $i=1, \ldots, d$ (here $\{x\}$ denotes the fractional part of a real number $x$). Observe that $f$ restricted to $B$ is a Freiman homomorphism. Indeed, for each $i=1, \ldots, d,$ observe that
    \[
    \varphi_p(\theta_i b)+\varphi_p(\theta_i b') = [p\theta_1 b]+[p\theta_i b'] = [p\theta_i (b+b')] = \varphi_p(\theta_i(b+b')),
    \]
    for $b, b' \in B$. We also have that $\E |B| = 2^{-d} N,$ since the $\theta_i$ are independent and the fractional parts $\{n \theta_i\}$ are distributed uniformly in $[0,1)$, for any integer $n$.
    
    Let $C \subset A$ be the set of elements $a \in A$ such that there exists $a' \neq a$ with $f(a) = f(a')$. Note that, for fixed distinct elements $a \neq a'\in A$, we have
    \[
    \Pr[ f(a)=f(a') ] = \Pr[\varphi_p(\theta_1 a) = \varphi_p(\theta_1 a')]^d \le \Pr[ \operatorname{dist}(\theta_1 (a-a'), \Z) \le 1/p ]^d \le (2/p)^d.
    \]
    So, by the union bound, the probability that $a \in C$ is bounded by $N (2/p)^d \le 2^{-d-1}$. We conclude that $\E |C| \le 2^{-d-1} N$. Finally, notice that if we let $A' = B\setminus C$, then $f: A'\rightarrow \F_p^d$ is a Freiman embedding and $\E|A'|\ge \E|B|-\E|C| \ge 2^{-d-1}N$. Thus, there is a choice of $\theta_1, \ldots, \theta_d$, for which  $|A'|\ge 2^{-d-1}N$ holds.
\end{proof}

\parag{Additive and multiplicative energies.}
Let $A$ be a finite set in a field $\F$. Using the notation $A^4=A\times A\times A\times A,$ 
define the \emph{additive energy} $E^+(A)$ and the \emph{multiplicative energy} $E^\times(A)$ of $A$ as follows. Let
\[
E^+(A) =|\{(a, b, a', b') \in A^4:~ a+b=a'+b'\}|,
\]
\[
E^\times(A) = |\{(a, b, a', b') \in A^4:~ a b=a' b'\}|.
\]

Improving an earlier result of Rudnev, Shkredov, and Stevens \cite{Rudnev19}, 
Shakan \cite[Theorem 1.11]{Shakan} established the following \emph{``energy sum-product estimate''}.

\begin{theorem}[Shakan]\label{shakan}
Let $A \subset \R$ be a finite set. There exists a subset $A' \subset A$ of size at least $|A|/2$ such that 
$$\min \{E^+(A'), E^\times(A') \} \lesssim |A|^{3 - \delta},$$
where $\delta=\frac7{26}$.
\end{theorem}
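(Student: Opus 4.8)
\medskip
\noindent\textbf{Proof idea.} This is Shakan's theorem~\cite[Theorem 1.11]{Shakan}, a refinement of the energy sum--product estimate of Rudnev, Shkredov and Stevens~\cite{Rudnev19}; here I only indicate the shape of the argument one would run. Note first that one genuinely has to pass to a subset: if $A$ is the disjoint union of an arithmetic progression and a geometric progression of the same size, then $E^+(A)\gtrsim |A|^3$ comes from the first half and $E^\times(A)\gtrsim |A|^3$ from the second, so $\min\{E^+(A),E^\times(A)\}$ has order $|A|^3$, while the arithmetic progression itself is a valid $A'$ with negligible multiplicative energy. So the content is the extraction of a large, proportional-size subset that is ``unstructured'' for at least one of the two operations.

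\medskip
\noindent The plan is to run a dichotomy on the additive energy. If $E^+(A)\lesssim |A|^{3-\delta}$ we take $A'=A$ and are done, since $E^+(B)\le E^+(A)$ whenever $B\subseteq A$, and symmetrically if $E^\times(A)$ is small; so assume both energies of $A$ exceed $|A|^{3-\delta}$ up to logarithmic factors. First I would dyadically decompose the difference function $r_{A-A}(x)=|\{(a,b)\in A^2:a-b=x\}|$ and pigeonhole, for a suitable scale $\tau$, a popular level set $\{x:r_{A-A}(x)\sim\tau\}$ accounting for a $\gtrsim 1$ share of $E^+(A)$ (up to logarithms). A Balog--Szemer\'edi--Gowers-type refinement, in its asymmetric (Katz--Koester) form to keep the exponents sharp, then produces a subset $A_1\subseteq A$ with $|A_1|\gtrsim |A|^{1-O(\delta)}$ and small additive doubling $|A_1+A_1|\lesssim |A|^{1+O(\delta)}$.

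\medskip
\noindent Next I would invoke the sum--product input: a subset of $\R$ with a small sumset can have neither a large product set nor a large multiplicative energy. Through the Szemer\'edi--Trotter incidence theorem one obtains Elekes/Solymosi/Konyagin--Shkredov-type bounds controlling $E^\times(A_1)$ by powers of $|A_1+A_1|$ and $|A_1|$, and Shakan's gain over~\cite{Rudnev19} comes from feeding this incidence estimate into a higher-energy decomposition of $A_1$ rather than applying it once. Chasing the losses along the chain ``$E^+(A)$ large $\Rightarrow$ $A_1$ additively structured $\Rightarrow$ $A_1$ multiplicatively small'' and then optimizing the dyadic thresholds is what produces the numerical exponent $\delta=7/26$.

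\medskip
\noindent The step I expect to be the real obstacle is reconciling all of this with the demand that $A'$ retain a full proportion $|A|/2$ of $A$: a black-box Balog--Szemer\'edi--Gowers only returns a subset of size $|A|/\mathrm{polylog}$ (or $|A|^{1-O(\delta)}$), which is far short of $|A|/2$, so one cannot simply output $A'=A_1$. Instead I would argue that ``$E^+$ large on \emph{every} $|A|/2$-subset'' is self-contradictory --- for instance by iterating a removal process that at each stage peels off only the small additively (or multiplicatively) structured piece identified above, and checking that after deleting an $o(1)$-fraction the residual set has small energy for one of the two operations. Making this process terminate before it drains half of $A$, while retaining the sharp exponent, is where the bulk of the technical work --- and of the bookkeeping behind $7/26$ --- lies.
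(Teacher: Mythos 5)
This statement is not proved in the paper at all: it is quoted verbatim as \cite[Theorem 1.11]{Shakan}, and the paper uses it as a black box. Your proposal likewise defers to Shakan and only sketches a strategy, so there is no complete argument to check on either side; as a ``proof'' it is a gap by construction, but citing the result is exactly what the paper does, and that is the acceptable resolution here.

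Two substantive comments on your sketch. First, the obstacle you single out as the hardest --- upgrading a BSG-type subset of size $|A|/\mathrm{polylog}$ to a subset of size $|A|/2$ --- is in fact the part with the cleanest resolution, and the paper even points to it: Shakan (like Balog--Wooley and Rudnev--Shkredov--Stevens before him) proves a \emph{decomposition} $A = B \sqcup C$ with $E^+(B)$ and $E^\times(C)$ both $\lesssim |A|^{3-\delta}$; since one of the two pieces has size at least $|A|/2$, taking $A'$ to be that piece immediately gives the stated form (the minimum of the two energies of $A'$ is bounded by whichever energy the decomposition controls on that piece). Your proposed ``iterative peeling'' is roughly how the decomposition itself is established, but the passage to $|A|/2$ is just pigeonhole, not extra technical work. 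Second, the middle of your outline (dyadic pigeonholing of $r_{A-A}$, asymmetric BSG, then Szemer\'edi--Trotter \`a la Elekes) describes the older sum--product template rather than the actual route to $\delta = 7/26$: the Rudnev--Shkredov--Stevens/Shakan arguments work directly with higher-order energies ($E_3$ and the associated $d_+$-type quantities) and Rudnev's point--plane incidence bound, avoiding Balog--Szemer\'edi--Gowers altogether; this is also why the same machinery transfers to $\C$ and $\F_p$ in Theorem~\ref{theorem2}. Neither issue affects the paper, but if you intend the sketch to reflect how $7/26$ is actually obtained, it should be reorganized around the energy decomposition rather than BSG plus incidences.
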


Actually, \cite{Shakan} proves a stronger decomposition-type result but we will not need it here.
An example of Balog and Wooley~\cite{BalogWooley} shows that Theorem~\ref{shakan} is false if we replace $\delta$ with any number larger than $\frac23$.

\parag{Proof idea.} To prove Theorem~\ref{main}, we apply Theorem~\ref{shakan} to the set $A\subset\R$, to construct a subset $A'\subset A$ with the required properties. We distinguish two cases, depending on whether the additive energy of $A'$ is smaller or its multiplicative energy. 

Suppose that $A'$ has small multiplicative energy, the other case is analogous. We construct a random \emph{additive} Sidon subset $B \subset A'$ of size $\sim |A|^{1/2}$ which has the `expected' \emph{multiplicative} energy, i.e., each multiplicative quadruple $(a,b,a',b')\in A'^4$ with $ab=a'b'$ has the same probability of appearing in $B^4$ as if $B$ was a uniformly selected random subset in $A'$ of size $|A|^{1/2}$. By taking a random subset of $B$ and deleting an element from each multiplicative quadruple, we obtain a large bi-Sidon subset $S \subset A$. Here the gain is due to the fact that $A'$ had very few multiplicative quadruples. This property was inherited by $B$, and so the deletion argument is more efficient. 
\smallskip

The crucial step of the above argument is the construction of the `sufficiently random' Sidon subset $B\subset A'$. This is achieved by taking a uniformly random parabola $P \subset \F_p^2$ and lifting it to a subset of $A'$ via an appropriate Freiman homomorphism. A similar construction, based on a uniformly random parabola, was recently (and independently from this work) used by Tao \cite{Tao} to provide a counterexample to an unrelated question of Erd\H os.

\section{Proof of Theorem \ref{main}} 

Let $A \subset \R$ be a finite set. Replacing $A$ with the bigger of the sets $A \cap \R_{>0}$ or $(-A) \cap \R_{>0}$, we may assume that $A \subset \R_{>0}$. By Theorem~\ref{shakan}, there exists $A' \subset A$ with $|A'|\ge|A|/2$ such that 
\begin{equation}\label{eq1}
\min \{E^+(A'), E^\times(A') \} \le K |A|^{3 - \delta} ,    
\end{equation}
where $\delta = \frac7{26}$ and $K = C \log^c |A|$ is a logarithmic loss (and $C, c \ge 0$ are absolute constants).
\smallskip

We distinguish two cases depending on which of the two energies in (\ref{eq1}) is smaller. Suppose first that we have $E^\times(A')\le K |A|^{3-\delta}$. Let $p$ be a prime such that $p \in \left[8|A|^{1/2}, 16|A|^{1/2}\right]$. Combining Propositions \ref{prop1} and \ref{prop2}, we can find a subset $A'' \subset A'$ of size at least $|A'|/8$ and a Freiman embedding $f: A'' \rightarrow \F_p^2$.
\smallskip

Let $P \subset \F_p^2$ be {\em a uniformly} selected \emph{random parabola}. More precisely, let $$P_0=\{(t, t^2), ~ t \in \F_p\}$$ 
be the standard unit parabola (mod $p$), and let $g \in \operatorname{Aff}(\F_p^2)$ be a \emph{uniformly} selected \emph{random invertible affine} map $g:\F_p^2\rightarrow\F_p^2$. Set $P = g(P_0)$.

\begin{obs}\label{obs1}
    Let $v_1, v_2, v_3, v_4 \in \F_p^2$ be distinct vectors. Then we have $$\Pr[\{v_1, v_2, v_3\} \subset P] \le p^{-3},\; \mbox{and}\;
    \Pr[\{v_1, v_2, v_3, v_4\} \subset P] \le 4 p^{-4}.$$
\end{obs}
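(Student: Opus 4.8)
The plan is to transfer the problem to the preimage of the random parabola. Since $g$ is uniform on the affine group $\operatorname{Aff}(\F_p^2)$, which has order $p^2(p^2-1)(p^2-p)$ ($p^2$ translations times $(p^2-1)(p^2-p)$ invertible linear maps), so is $\phi := g^{-1}$, and $\{v_1,v_2,v_3\}\subset P = g(P_0)$ if and only if $\{\phi v_1,\phi v_2,\phi v_3\}\subset P_0$ (and likewise for the four-vector statement). So each of the two probabilities equals the number of $\phi\in\operatorname{Aff}(\F_p^2)$ carrying the prescribed tuple into $P_0$, divided by $|\operatorname{Aff}(\F_p^2)|$. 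Throughout I may assume $p$ is large, in particular odd, since $p\ge 8|A|^{1/2}$.

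First I would dispose of degenerate configurations. Any line meets $P_0 = \{(t,t^2):t\in\F_p\}$ in at most two points, since substituting $y=x^2$ into $ax+by=c$ yields a quadratic in $x$; hence $P_0$ has no three distinct collinear points. As affine maps are injective and preserve collinearity, if three of $v_1,\dots,v_4$ are collinear then the relevant event is empty and the asserted bound is trivially true. So I may assume $v_1,v_2,v_3$ are affinely independent. The structural input I would then use is that $\operatorname{Aff}(\F_p^2)$ acts simply transitively on ordered triples of affinely independent points, while any three distinct points of $P_0$ form such a triple; consequently the number of $\phi$ with $\phi v_1,\phi v_2,\phi v_3\in P_0$ equals the number of ordered triples of distinct points of $P_0$, i.e.\ $p(p-1)(p-2)$. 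Dividing by $|\operatorname{Aff}(\F_p^2)|$ yields $\Pr[\{v_1,v_2,v_3\}\subset P] = \frac{p-2}{p^2(p^2-1)}\le p^{-3}$, which is the first bound.

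For the four-point bound, any qualifying $\phi$ in particular carries $v_1,v_2,v_3$ into $P_0$, hence is the unique affine map taking $(v_1,v_2,v_3)$ to $\big((t_1,t_1^2),(t_2,t_2^2),(t_3,t_3^2)\big)$ for some distinct $t_1,t_2,t_3\in\F_p$; I then need to identify when such a $\phi$ also sends $v_4$ into $P_0$. Writing $v_4 = \gamma v_1+\alpha v_2+\beta v_3$ with $\alpha+\beta+\gamma = 1$ --- a unique such representation exists, and $\alpha,\beta,\gamma\ne 0$ by the no-three-collinear hypothesis --- and using that affine maps preserve affine combinations, the condition $\phi v_4\in P_0$ becomes $\gamma t_1^2+\alpha t_2^2+\beta t_3^2 = (\gamma t_1+\alpha t_2+\beta t_3)^2$. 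I expect this to simplify to the symmetric identity $\gamma\alpha(t_1-t_2)^2+\gamma\beta(t_1-t_3)^2+\alpha\beta(t_2-t_3)^2 = 0$, i.e.\ to the vanishing of $Q(t_1-t_2,\,t_1-t_3)$ where $Q(u,w) = \alpha(1-\alpha)u^2-2\alpha\beta\,uw+\beta(1-\beta)w^2$ has discriminant $-4\alpha\beta\gamma\ne0$. A nondegenerate binary quadratic form over $\F_p$ ($p$ odd) vanishes on at most $2(p-1)$ nonzero points, and each pair $(u,w)$ arises from at most $p$ triples $(t_1,t_2,t_3)$, so at most $2p(p-1)$ of the maps qualify. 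Hence $\Pr[\{v_1,v_2,v_3,v_4\}\subset P]\le \frac{2p(p-1)}{p^2(p^2-1)(p^2-p)} = \frac{2}{p^2(p^2-1)}\le 4p^{-4}$.

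The one genuinely computational step, and the place where I expect to need the most care, is verifying that $\phi v_4\in P_0$ really does collapse to that clean symmetric identity and that the induced form in the variables $t_1-t_2$, $t_1-t_3$ is nondegenerate; everything else is bookkeeping with $|\operatorname{Aff}(\F_p^2)|$ and the simple transitivity of the affine action on affine frames.
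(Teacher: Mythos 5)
Your proof is correct, and its overall architecture matches the paper's: dispose of collinear configurations, use affine invariance to count the three-point event exactly, and then show that conditioned on $v_1,v_2,v_3\in P$ the fourth point lies on $P$ with probability at most $2/(p-2)$. For the first bound your simple-transitivity argument on affine frames is the paper's double count (noncollinear triples in $\F_p^2$ versus triples on a parabola) in group-theoretic language, and the resulting fraction $\frac{p-2}{p^2(p^2-1)}$ is identical. The genuine divergence is the fourth-point step: the paper normalizes $v_1,v_2,v_3$ to $(0,0),(1,0),(0,1)$ and exhibits the explicit pencil $P_s=\{(x+sy)^2=x+s^2y\}$ of $p-2$ parabolas through them, asserting that each further point lies on at most two of them; you instead parameterize the qualifying maps by the images $(t_i,t_i^2)$ and reduce $\phi v_4\in P_0$ to the vanishing of $\gamma\alpha(t_1-t_2)^2+\gamma\beta(t_1-t_3)^2+\alpha\beta(t_2-t_3)^2$. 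The step you flagged as needing care does check out: expanding $(\gamma t_1+\alpha t_2+\beta t_3)^2$ and using $1-\gamma=\alpha+\beta$ etc.\ yields exactly that symmetric identity, the form $Q(u,w)=\alpha(1-\alpha)u^2-2\alpha\beta uw+\beta(1-\beta)w^2$ has discriminant $-4\alpha\beta\gamma\neq 0$, and the count $2(p-1)\cdot p$ reproduces the paper's $\frac{2}{p^2(p^2-1)}$. Your route is more computational but entirely self-contained (the paper leaves the ``at most $2$ of the $P_s$'' claim to the reader), at the mild cost of assuming $p$ odd for the quadratic-form zero count --- harmless here since $p\ge 8|A|^{1/2}$, and for $p=2$ the events are vacuous.
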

\begin{proof}
    If a triple of vectors $v_i$ are collinear, then they cannot lie on the same parabola. Hence, the probabilities in question are 0. So, we can assume that no three of the $v_i$'s are collinear.

    Note that the probability $q=\Pr[\{v_1, v_2, v_3\} \subset P]$ does not depend on the choice of the (noncollinear) vectors $v_1, v_2, v_3$.  Indeed, any two noncollinear triples $(v_1, v_2, v_3)$ and $(u_1, u_2, u_3)$ can be mapped into each other by an affine map $h$. Since the distribution of $P$ is $h$-invariant, the corresponding probabilities are the same. The number of noncollinear triples $(v_1, v_2, v_3)$ in $\F_p^2$ is $p^2(p^2-1)(p^2-p)$, and every parabola $P$ contains exactly $p(p-1)(p-2)$ of them. Therefore, by a double-counting argument we obtain that
    \[
    q = \frac{p(p-1)(p-2)}{p^2(p^2-1)(p^2-p)} = \frac{p-2}{p^2(p+1)(p-1)}<p^{-3},
    \]
    which proves the first inequality.
    
    To verify the second one, we upper bound the conditional probability
    \[
    q' = \Pr[v_4 \in P~|~ v_1, v_2, v_3 \in P].
    \]
    This probability is invariant under affine transformations of the vectors $v_i$, so, applying an appropriate affine map, we may assume that $v_1 = (0, 0)$, $v_2 = (1, 0)$, $v_3 = (0, 1)$, and $v_4 = (x_0, y_0)$ for some $x_0, y_0 \neq 0$ and $x_0+y_0\neq 1$. (Here, we used the assumption that no three $v_i$'s are collinear). Note that there are exactly $p-2$ parabolas $P$ that pass through the points $(0, 0)$, $(1, 0),$ and $(0, 1)$, and that every point $(x_0, y_0)$ belongs to at most 2 of them. One way of seeing this is to write down an explicit parameterization of this family of parabolas, using coordinates. Namely, for any $s \in \F_p\setminus \{0, 1\}$, let
    \[
    P_s = \{ (x, y) \in \F_p^2:~ (x+sy)^2 = x+ s^2 y \}.
    \]
    This implies that $q' \le \frac{2}{p-2}$, and we have
    \[ \Pr[\{v_1, v_2, v_3, v_4\} \subset P] \le q'q \le \frac{2}{p^2(p^2-1)} \le 4p^{-4}\] as required.
\end{proof}

Define $B = f^{-1}(P)$. That is, let $B$ be the set of elements $a \in A''$ that are getting mapped to the random parabola $P$ by the Freiman embedding $f$ obtained at the beginning of the proof. Since $P \subset \F_p^2$ is a Sidon set, $B$ is an \emph{additive} Sidon subset in $A$. Obviously, we have $$\E|B| = p^{-1} |A''| \ge p^{-1} |A'|/8\ge   2^{-7}|A|^{1/2}.$$ 

Next, we estimate the expectation of the \emph{multiplicative} energy of $B$. For convenience, let $E_0^\times(B)$ denote the number of multiplicative quadruples $b_1 b_2= b_3 b_4$ with all $b_i$ distinct, and let $E_1^\times(B)$ the number of quadruples $b_1 b_2= b_3 b_4$ with $b_1 =b_2$ or $b_3=b_4$ but not both.

Let $a_1 a_2 = a_3 a_4$ be a multiplicative quadruple in $A''$.
\begin{itemize}
    \item If the elements $a_1, a_2, a_3, a_4$ are all distinct, then, by Observation \ref{obs1} applied to vectors $v_i = f(a_i)$, we obtain $$\Pr[\{a_1, a_2, a_3, a_4\} \subset B] \le 4 p^{-4}.$$
    \item If $a_1\neq a_2$ and $a_3= a_4$, then, by Observation \ref{obs1}, we obtain $$\Pr[\{a_1, a_2, a_3, a_4\} \subset B] \le p^{-3}.$$ The case $a_1=a_2$ and $a_3\neq a_4$ is similar.
\end{itemize}
\smallskip

By the linearity of expectation, we get
\[
\E[ E_0^\times(B)] \le 4p^{-4} E^\times(A'') \le 4K p^{-4} |A|^{3-\delta} \le K |A|^{1-\delta},
\]
\[
\E[ E_1^\times(B)] \le p^{-3} |A''|^2 \le |A|^{1/2}.
\]

Let $q = c|A|^{\frac{\delta}3 - \frac16}$ for a sufficiently small $c>0$
and let $\tilde B \subset B$ be a random subset of $B$, where each element is included independently with probability $q$. Then we have
\[
\E|\tilde B| = q \E |B| \ge 2^{-7}c |A|^{\frac13+\frac{\delta}{3}},
\]
\[
\E[E_0^\times(\tilde B)] = q^4 \E[E_0^\times(B)] \le c^4 K |A|^{\frac13+\frac{\delta}3}, 
\]
\[
\E[E_1^\times(\tilde B)] = q^3 \E[E_1^\times(B)] \le c^3 |A|^{\delta}.
\]
Note that $\delta \le \frac13+\frac{\delta}3$. 
\smallskip

Finally, let $S \subset B$ be a set obtained from $B$ by removing one element from each nontrivial multiplicative quadruple. It is clear that then $S$ is a \emph{bi-Sidon} set and its expected size is at least 
\[
\E|S| \ge \E|\tilde B| - \E[E_0^\times(\tilde B)] -\E[E_1^\times(\tilde B)] \ge (2^{-7}c - c^4 K-c^3) |A|^{\frac13+\frac{\delta}3}.
\]
Thus, taking $c = 2^{-10} K^{-1}$ and choosing an $S$ with $|S|\ge \E|S|$, gives us the desired bi-Sidon subset $S\subset A$ of size $\kappa |A|^{\frac13+\frac{\delta}3} = \kappa |A|^{\frac{33}{78}}$ where $\kappa \ge c' \log^{-C} |A| $ for some constants $c',C >0$. 
\smallskip

This concludes the proof Theorem~\ref{main} in the case when the second term of (\ref{eq1}) is smaller than the first. In the complementary case, we have that $E^+(A') \le K |A|^{3-\delta}$, and a totally symmetric argument applies\footnote{According to our assumption, $A \subset \R_{>0}$. Since $G = (\R_{>0}, \times)$ is a torsion-free abelian group, we can apply Proposition \ref{prop1} to the pair $(A,G)$).}.   \qed
\bigskip

\parag{Modifications for the Proof of Theorem~\ref{theorem2}.}
The proof closely follows the above arguments. The only difference is that at the proof of (i) and (ii), instead of Shakan's theorem (Theorem~\ref{shakan}), we need to apply the energy sum-product estimate of Rudnev, Shkredov, and Stevens~\cite[Theorem 5]{Rudnev19}.

\begin{theorem}[Rudnev, Shkredov, Stevens]
Let $A$ be a finite subset of a field $\F$. There exists a subset $A' \subset A$ of size at least $|A|/2$ such that 
$$\min \{E^+(A'), E^\times(A') \} \lesssim |A|^{3 - \delta},$$
where $\delta=\frac14$ if $\F=\C$, and $\delta=\frac15$ if $\F=\F_p$ for a prime $p$. In the latter case, we need the additional assumption that $|A|\le p^{\frac58}$.
\end{theorem}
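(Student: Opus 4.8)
The plan is to deduce the theorem from the stronger \emph{decomposition} statement: after discarding $0$, every finite $A\subset\F$ can be partitioned as $A=A_1\sqcup A_2$ with $E^+(A_1)\lesssim|A|^{3-\delta}$ and $E^\times(A_2)\lesssim|A|^{3-\delta}$, where $\delta=\tfrac14$ for $\F=\C$ and $\delta=\tfrac15$ for $\F=\F_p$ with $|A|\le p^{5/8}$. Given such a partition, one of $A_1,A_2$ has size $\ge|A|/2$; if it is $A_1$, take $A'=A_1$, so that $\min\{E^+(A'),E^\times(A')\}\le E^+(A_1)\lesssim|A|^{3-\delta}$, and symmetrically otherwise. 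Passing to a proper subset is genuinely necessary here: if $A$ is a disjoint union of a long arithmetic progression and a long geometric progression of equal sizes, then $E^+(A)$ and $E^\times(A)$ are both $\gtrsim|A|^3$, and only after restricting to one of the two structured halves does one of the energies become small. This is the same blueprint that Shakan later refined to obtain Theorem~\ref{shakan} over $\R$; the only thing that changes is the incidence input.

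To produce the decomposition I would peel structured pieces off $A_2$ one at a time. As long as $E^\times(A_2)>|A|^{3-\delta}$, a dyadic-popularity argument together with the multiplicative Balog--Szemer\'edi--Gowers theorem yields a subset $X\subseteq A_2$ that carries a definite fraction of the multiplicative energy of $A_2$ and has small multiplicative doubling, $|X\cdot X|\lesssim|X|\cdot(|A_2|^3/E^\times(A_2))^{O(1)}$; one moves $X$ into $A_1$ and repeats. Two facts keep this in check: first, the sum--product step below shows that a set with small multiplicative doubling has small additive energy, $E^+(X)\lesssim|A|^{3-\delta}$; second, since $E^+$ of a union of $m$ sets is, by Cauchy--Schwarz applied to difference-representation functions, at most $m^{O(1)}$ times the sum of the individual additive energies, the accumulated $A_1$ still satisfies $E^+(A_1)\lesssim|A|^{3-\delta}$ provided the number of peeling rounds stays polylogarithmic.

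The engine is the sum--product incidence estimate, and this is the only place the field enters. Over $\C$ one uses the complex Szemer\'edi--Trotter theorem of T\'oth and Zahl, $I(P,L)\lesssim|P|^{2/3}|L|^{2/3}+|P|+|L|$ for points and lines in $\C^2$: applied to a Cartesian point set built from $X$ and to a family of lines indexed by the (few) products in $X\cdot X$ in the classical Elekes manner, it bounds $E^+(X)$, and carrying the $2/3$-exponent through the computation gives $\delta=\tfrac14$. Over $\F_p$ the point--line bound fails, so one replaces it by Rudnev's point--plane incidence theorem in $\F_p^3$; the point--plane configuration attached to $X$ has $\sim|X|^3$ points and planes, so it lies in Rudnev's admissible range only when $|X|\ll p^{2/3}$, and keeping the secondary ``collinear'' error term of Rudnev's bound below the main term forces the sharper restriction $|X|\le p^{5/8}$ and yields the weaker value $\delta=\tfrac15$.

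I expect the main obstacle to be the quantitative bookkeeping in the peeling iteration: each use of Balog--Szemer\'edi--Gowers costs a factor $(|A|^3/E^\times(A_2))^{O(1)}\le|A|^{O(\delta)}$, and a careless iteration could need a small power of $|A|$ many rounds, blowing past the $|A|^{3-\delta}$ budget. The remedy is an energy-weighted version that at each step strips off a fixed proportion of the \emph{remaining} multiplicative energy, so that only $O(\log^{O(1)}|A|)$ rounds occur; concretely this means applying Balog--Szemer\'edi--Gowers to the heavy sublevel set of the ratio-representation function $r_{A_2/A_2}$ and then arguing, via the same incidence estimate, that this heavy part has small additive energy. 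Making all exponents survive this process with the clean values $\tfrac14$ and $\tfrac15$, and tidying the $o(|A|)$ residue at each step so that the final partition is exact (hence the clean ``$\ge|A|/2$''), is where the real work lies.
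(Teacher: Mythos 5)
This statement is not proved in the paper at all: it is quoted verbatim as Theorem~5 of Rudnev, Shkredov, and Stevens \cite{Rudnev19}, and the authors use it as a black box in the proof of Theorem~\ref{theorem2}(i)--(ii). So there is no in-paper argument to compare yours against; what can be assessed is whether your sketch is a viable reconstruction of the cited result.

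Your overall blueprint --- a low-energy decomposition $A=A_1\sqcup A_2$ obtained by iteratively peeling off multiplicatively structured pieces, with an incidence theorem converting ``multiplicatively structured'' into ``small additive energy'' --- is the right genre, and your reduction from the decomposition to the stated ``$|A|/2$'' form is fine. But there is a genuine gap at the engine of the iteration: the Balog--Szemer\'edi--Gowers step. Each application of BSG to a set whose multiplicative energy is $E^\times(A_2)= |A_2|^3/K$ produces a subset of density only $K^{-O(1)}$ with doubling $K^{O(1)}$, i.e.\ it loses a \emph{polynomial} factor in the quality parameter $K$, not a polylogarithmic one. In the regime relevant here, $K\sim|A|^{\delta}$, so a single BSG application already costs $|A|^{O(\delta)}$ and wipes out the exponent you are trying to prove; your proposed remedy (an energy-weighted peeling that limits the number of rounds to polylogarithmic) controls only the \emph{number} of rounds, not the per-round polynomial loss, so the clean values $\delta=\frac14$ and $\delta=\frac15$ do not survive. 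This is precisely why the actual proofs of such decompositions (Balog--Wooley, Konyagin--Shkredov, Rudnev--Shkredov--Stevens, and Shakan's Theorem~\ref{shakan}) avoid BSG altogether and instead work directly with popular difference/quotient sets and higher energies ($E_3$-type quantities), feeding these into Rudnev's point--plane incidence theorem; that theorem, not the T\'oth--Zahl Szemer\'edi--Trotter bound, is also the tool RSS use in the complex case. A second, smaller inaccuracy: the Elekes point--line configuration you invoke bounds the \emph{sumset} and \emph{productset}, not the energies, and converting it into an energy bound again requires either BSG (with the same fatal loss) or a different incidence configuration. As written, the proposal would not recover the stated exponents.
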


For the proof of (iii), we need to apply a somewhat weaker but more general bound of Balog and Wooley \cite[Theorem 1.3]{BalogWooley}.

\footnotesize{

\bigskip


\noindent\textsc{J. Pach, HUN-REN R\'enyi Institute of Mathematics, Budapest, Re\'altanoda u. 13--15, Budapest 1053, HUNGARY}. {\it{Email address}}: \href{mailto:pach@renyi.hu}{\nolinkurl{pach@cims.nyu.edu}}

\medskip

\noindent\textsc{D. Zakharov, Department of Mathematics, Massachusetts Institute of Technology, Cambridge, MA 02139, USA}. {\it{Email address}}: \href{mailto:zakhdm@mit.edu}{\nolinkurl{zakhdm@mit.edu}}
}


\begin{thebibliography}{99}

\bibitem{BalogWooley} Balog, Antal, and Trevor D. Wooley. ``A low-energy decomposition theorem.'' The Quarterly Journal of Mathematics 68.1 (2017): 207-226.

\bibitem{Balogh} Balogh, József, Zoltán Füredi, and Souktik Roy. ``An upper bound on the size of Sidon sets.'' \emph{American Mathematical Monthly} \textbf{130} (5) (2023), 437--445.

\bibitem{Cilleruelo} Cilleruelo, Javier. ``Sidon sets in $\N^d$.'' \emph{Journal of Combinatorial Theory, Series A} \textbf{117} (7) (2010), 857--871.

\bibitem{Elekes}  Elekes, Gy\"orgy. ``On the number of sums and products.'' \emph{Acta Arithmetica} \textbf{81} (4) (1997), 365--367. 

\bibitem{ErSz} Erd\H os, Paul and Endre Szemer\'edi. ``On sums and products of integers.'' In: \emph{Studies in Pure Mathematics. To the memory of Paul Tur\'an.} Birkh\"auser Verlag, Basel, 1983, 213--218.

\bibitem{ErdosTuran} Erd{\H o}s, Paul, and P\'al Tur\'an. ``On a problem of Sidon in additive number theory, and on some related problems.'' \emph{J. London Math. Soc} \textbf{16} (4) (1941), 212--215. Also: ``Addendum.'' \textbf{19} (1944), 208.

\bibitem{Mudgal} Jing, Yifan, and Akshat Mudgal. ``Finding large additive and multiplicative Sidon sets in sets of integers.'' Mathematische Annalen (2024): 1-31.

\bibitem{Komlos} Koml\'os, J\'anos, Mikl\'os Sulyok, and Endre Szemer\'edi. ``Linear problems in combinatorial number theory.'' \emph{Acta Mathematica Hungarica} \textbf{26} (1-2) (1975), 113--121.

\bibitem{Lindstrom} Lindstr\"om, Bernt. ``An inequality for B2-sequences.'' \emph{J. Combinatorial Theory} \textbf{6} (1969), 211--212.

\bibitem{Mudgal23} Mudgal, Akshat. ``Unbounded expansion of polynomials and products.'' \emph{Mathematische Annalen} (2023), 1--35. Online.

\bibitem{Pohoata} Pohoata, Cosmin. ``Sidon sets and sum-product phenomena.'' Blog post. 

https://pohoatza.wordpress.com/2021/01/23/sidon-sets-and-sum-product-phenomena/

\bibitem{Roche21} Roche-Newton, Oliver, and Audie Warren. ``Additive and multiplicative Sidon sets.'' \emph{Acta Mathematica Hungarica} \textbf{165} (2) (2021), 326--336.

\bibitem{Rudnev19} Rudnev, Misha, Ilya D. Shkredov, and Sophie Stevens. ``On the energy variant of the sum-product conjecture.'' \emph{Revista matemática iberoamericana} \textbf{36} (1)1 (2019), 207--232.

\bibitem{RudnevStevens} Rudnev, Misha and Sophie Stevens. ``An update on the sum-product problem.'' \emph{Mathematical Proceedings of the Cambridge Philosophical Society} \textbf{173} (2) (2022), 411--430.

\bibitem{RuzsaProjection} Ruzsa, Imre Z.
``Generalized arithmetical progressions and sumsets.''
\emph{Acta Math. Hungarica} \textbf{65} (4) (1994), 379--388.

\bibitem{RuzsaSidon} Ruzsa, Imre Z. ``Additive and multiplicative Sidon sets.'' \emph{Acta Mathematica Hungarica} \textbf{112} (4) (2006).

\bibitem{Ruzsa} Ruzsa, Imre Z. ``Additive and multiplicative Sidon subsets.'' \emph{Lecture in Number Theory Seminar at R\'enyi Institute}, Budapest, April 2021. Video available at 
video.renyi.hu/videos

\bibitem{Shakan} Shakan, George. ``On higher energy decompositions and the sum–product phenomenon.'' \emph{Mathematical Proceedings of the Cambridge Philosophical Society} \textbf{167} (3) (2019), 599--617.

\bibitem{Shkredov} Shkredov, Ilya D.  ``On an application of higher energies to Sidon sets.'' \emph{Combinatorica} \textbf{43} (2) (2023), 329--345.

\bibitem{Sidon} Sidon, Simon. ``Ein Satz über trigonometrische Polynome und seine Anwendungen in der Theorie der Fourier-Reihen.'' \emph{Math. Annalen} \textbf{106} (1932), 536--539.

\bibitem{Singer} Singer, James. ``A theorem in
finite projective geometry and some applications to number theory.'' \emph{Transactions American Mathematical Society} \textbf{43} (1938), 377--385.

\bibitem{Solymosi} Solymosi, J\'ozsef.  ``Bounding multiplicative energy by the sumset.'' \emph{Advances in Mathematics} \textbf{222} (2) (2009), 402--408.

\bibitem{Tao} Tao, Terence. ``Planar point sets with forbidden 4-point patterns and few distinct distances.'' \emph{arxiv preprint} arXiv:2409.01343 (2024)

\bibitem{TaoVu}  Tao, Terence, and Van H. Vu. ``\emph{Additive Combinatorics.'' Cambridge Studies in Advanced Mathematics Vol. 105.} Cambridge University Press, 2006.
\end{thebibliography}
\end{document}